
\documentclass[12pt]{amsart}

\usepackage{latexsym}
\usepackage{amssymb}
\usepackage[cp850]{inputenc}
\usepackage{epsfig}
\usepackage{psfrag}
\usepackage{amsthm}
\usepackage{amscd}
\usepackage{amsmath}
\usepackage{amsfonts}
\usepackage{graphics}
\usepackage[all]{xy}

\newtheorem{sat}{Theorem}				\newtheorem{lem}{Lemma}

\newtheorem*{defi*}{Definition}	\newtheorem*{bei*}{Example}
\newtheorem*{sat*}{Theorem}			\newtheorem*{kor*}{Corollary}
\newtheorem*{rmk*}{Remark}


\let\ssection=\section
\renewcommand{\section}{\setcounter{equation}{0}\ssection}

\newtheorem*{namedtheorem}{\theoremname}
\newcommand{\theoremname}{testing}
\newenvironment{named}[1]{\renewcommand{\theoremname}{#1}\begin{namedtheorem}}{\end{namedtheorem}}

\theoremstyle{remark}
\newtheorem*{bem}{Remark}

\newcommand{\BR}{\mathbb R}			
			
			\newcommand{\BZ}{\mathbb Z}
			\newcommand{\BT}{\mathbb T}

\newcommand{\CA}{\mathcal A}

\newcommand{\CS}{\mathcal S}		\newcommand{\CT}{\mathcal T}
		
		\newcommand{\CX}{\mathcal X}
\newcommand{\CY}{\mathcal Y}

\newcommand{\D}{\partial}

\DeclareMathOperator{\Diff}{Diff}	
\DeclareMathOperator{\SL}{SL}		

\DeclareMathOperator{\Id}{Id}		
\DeclareMathOperator{\vol}{vol}		

\DeclareMathOperator{\rank}{rank}

\DeclareMathOperator{\SO}{SO}

\newcommand{\trans}{{ }^t}
\newcommand{\bs}{\backslash}
\DeclareMathOperator{\syst}{syst}
\DeclareMathOperator{\vcdim}{vcdim}

\DeclareMathOperator{\cdim}{cdim}

\begin{document}

\title[]{The spine which was no spine}
\author{Alexandra Pettet \& Juan Souto}

\begin{abstract}
Let $\CT_n$ be the Teichm\"uller space of flat metrics on the $n$-dimensional torus $\BT^n$ and identify $\SL_n\BZ$ with the corresponding mapping class group.  We prove that the subset $\CY$ consisting of those points at which the systoles generate $\pi_1(\BT^n)$ is, for $n \geq 5$, not contractible. In particular, $\CY$ is not a $\SL_n\BZ$-equivariant deformation retract of $\CT_n$.
\end{abstract}
\maketitle

For $n\ge 2$ let $\CT_n$ be the Teichm\"uller space of flat metrics with unit volume on the $n$-dimensional torus $\BT^n=\BR^n/\BZ^n$. To be more precise, $\CT_n$ is the set of equivalence classes of unit volume flat metrics on $\BT^n$ where two metrics $\rho$ and $\rho'$ are equivalent if there is an orientation preserving diffeomorphism $\phi\in\Diff_+(\BT^n)$ homotopic to the identity with $\rho'=\phi^*\rho$. We consider on the Teichm\"uller space $\CT_n$ the topology with respect to which the classes of two flat metrics $\rho$ and $\rho'$ are close if there is a diffeomorphism $\phi\in\Diff_+(\BT^n)$ homotopic to the identity such that $\rho'$ and $\phi^*\rho$ are close as tensors.

Every element $A\in\SL_n\BZ$ induces an orientation preserving diffeomorphism $A\in\Diff_+(\BT^n)$ which is said to be {\em linear}. We obtain thus a right action of $\SL_n\BZ$ on $\CT_n$:
$$\CT_n\times\SL_n\BZ\to\CT_n,\ (\rho,A)\mapsto A^*\rho$$
which is properly discontinuous. There exists a finite index subgroup $\Gamma$ of $\SL_n\BZ$ which acts freely; in particular, the contractibility of $\CT_n$ implies that for any such subgroup $\Gamma$, the quotient $\CT_n/\Gamma$ is an Eilenberg-MacLane space for $\Gamma$.

The systole $\syst(\rho)$ of a point $\rho\in\CT_n$ is the length of the shortest homotopically essential geodesic in the flat torus $(\BT^n,\rho)$. Let $\CS(\rho)$ be the set of homotopy classes of geodesics in $(\BT^n,\rho)$ with length $\syst(\rho)$; the elements in $\CS(\rho)$ are known as the {\em systoles} of $(\BT^n,\rho)$. Ash \cite{Ash-Morse} proved that the systole function
$$\CT_n\to(0,\infty),\ \ \rho\mapsto\syst(\rho)$$
is a $\SL_n\BZ$-equivariant topological Morse function, and so it is not surprising that it can be used to construct a particularly nice $\SL_n\BZ$-equivariant spine, i.e., deformation retract, of $\CT_n$. More precisely, the following result was proved in a different language and much greater generality by Ash \cite{Ash}:

\begin{sat}[Ash]\label{well-rounded}
The subset $\CX$ of $\CT_n$ consisting of those points $\rho$ with the property that $\CS(\rho)$ generates a finite index subgroup of $\pi_1(\BT^n)$ is an $\SL_n\BZ$-equivariant spine of $\CT_n$.
\end{sat}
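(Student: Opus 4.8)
The plan is to work in the model $\CT_n\cong\SL_n\BR/\SO(n)$, the space of unimodular positive definite quadratic forms $q$ on $\BR^n$: a flat unit-volume metric on $\BT^n=\BR^n/\BZ^n$ up to isotopy is exactly its constant metric tensor $q$ (with $\det q=1$), a closed geodesic in the homotopy class $v\in\BZ^n$ has squared length $q(v)$, so $\syst(\rho)^2$ becomes the arithmetic minimum $m(q):=\min_{0\ne v\in\BZ^n}q(v)$, the set $\CS(\rho)$ becomes the set $M(q)$ of minimal vectors, and $\CX$ becomes the \emph{well-rounded locus} $\{q:\operatorname{Span}_\BR M(q)=\BR^n\}$. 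Two preliminary observations: (i) $\CX$ is closed --- if $q_k\to q$ with each $M(q_k)$ spanning, then by Hermite's bound the sets $M(q_k)$ lie in a fixed ball, so some spanning tuple $v_1,\dots,v_m$ is minimal for infinitely many $k$, whence $q(v_i)=\lim q_k(v_i)=\lim m(q_k)=m(q)$ by continuity of $\syst$, so $v_i\in M(q)$; and (ii) the subspace $V(q):=\operatorname{Span}_\BR M(q)$ is \emph{lower} semicontinuous --- a limit of minimal vectors is minimal --- so $V$ can only jump up under specialization, and $\CT_n\smallsetminus\CX$ is open.

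First I would show that \emph{every point of $\CT_n\smallsetminus\CX$ is a regular point of the topological Morse function $\syst$}, which by \cite{Ash-Morse} is indeed a topological Morse function. Fix $\rho\notin\CX$, put $V=V(\rho)\subsetneq\BR^n$, split $\BR^n=V\oplus V^\perp$ orthogonally for $\rho$, and deform $\rho$ along the line $\rho_t$ that rescales the metric by $e^{at}$ on $V$ and by $e^{bt}$ on $V^\perp$, with $a\dim V+b(n-\dim V)=0$ and $a>0>b$. Since the current systoles all lie in $V$, their lengths are exactly $e^{at}\syst(\rho)$ along this line, while the finitely many remaining vectors of length near $\syst(\rho)$ are strictly longer and stay strictly longer for small $|t|$; hence $\syst(\rho_t)=e^{at}\syst(\rho)$ for $t$ near $0$. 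Thus $\syst$ is smooth and strictly monotone, with nonzero derivative, through $\rho$ along this direction, which is incompatible with the local behaviour of a topological Morse function at any of its critical points. In particular the critical set of $\syst$ is contained in $\CX$.

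The second, and main, step is to promote this to an $\SL_n\BZ$-equivariant deformation retraction onto $\CX$. The natural idea is to flow along the ``expand $V(\rho)$, contract $V(\rho)^\perp$'' direction found above --- this strictly increases $\syst$ --- and to note that once a vector outside $V(\rho)$ joins $M(\rho)$ the old systoles persist, so $V(\rho)$ strictly enlarges; after at most $n$ such phases one has $V(\rho)=\BR^n$ and has reached $\CX$, at which point the direction degenerates and the flow halts. Combined with Hermite's bound (so $\syst$ cannot run off to $\infty$) and properness of $\syst$ in the thick part, every trajectory meets $\CX$ in finite time, and reparametrizing so that all of $\CT_n$ arrives on $\CX$ at time $1$ while $\CX$ stays pointwise fixed yields the retraction; $\SL_n\BZ$-equivariance is automatic since $\syst$, $M$ and $V$ are defined $\SL_n\BZ$-invariantly. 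The hard part --- and the technical heart of \cite{Ash} --- is that the flow direction is \emph{discontinuous} exactly along the codimension-$\ge1$ loci where $M(\rho)$ and $V(\rho)$ jump, so the construction must instead be organized along the $\SL_n\BZ$-invariant cell decomposition of $\CT_n$ by minimal-vector type $\sigma\mapsto\{q:M(q)=\sigma\}$, with $\CX$ the subcomplex consisting of the cells whose type spans $\BR^n$: one checks that the non-well-rounded part of this complex collapses, cell by cell in order of increasing dimension, onto $\CX$, and that these collapses patch into a continuous equivariant deformation retraction. Verifying this collapsibility --- equivalently, the continuity and mutual compatibility of the phase-by-phase flow across the jump loci of $M$ --- is where essentially all of the work lies.
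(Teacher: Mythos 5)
Your rescaling flow -- expand $V(\rho)=\langle\CS(\rho)\rangle_\BR$ by $e^{at}$, contract $V(\rho)^\perp$ by $e^{bt}$ -- is exactly the flow the paper uses, and you correctly identify that the sole obstacle is the discontinuity of the flow direction across the loci where $\CS(\rho)$ jumps. But your proposed resolution (``reorganize along the cell decomposition by minimal-vector type and collapse cell by cell'') is left entirely as an assertion, and it is not what the paper does; as stated it would send you down a harder path, since the strata $\{q:M(q)=\sigma\}$ are not closed cells of a CW structure to which a combinatorial collapse argument obviously applies.

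The paper closes the gap differently and more cheaply. Filter $\CT_n=\CX_1\supset\CX_2\supset\dots\supset\CX_n=\CX$ by the rank $k$ of $\langle\CS(\rho)\rangle$, and retract one layer at a time, $\CX_k$ onto $\CX_{k+1}$. Within $\CX_k\setminus\CX_{k+1}$ the rank is constant, so the flow $T_A^\lambda$ is well-defined there, and one introduces the stopping time $\tau(A)=$ the supremum of $\lambda$ for which $T_A^\lambda A$ stays in $\CX_k\setminus\CX_{k+1}$, set to $0$ on $\CX_{k+1}$. The key points you are missing are that $\tau$ is continuous on \emph{all} of $\CX_k$ and vanishes precisely on $\CX_{k+1}$, so that $(t,A)\mapsto T_A^{t\tau(A)}A$ is a continuous equivariant homotopy even though $(\lambda,A)\mapsto T_A^\lambda A$ is not continuous across $\CX_{k+1}$. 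Your ``after at most $n$ phases'' remark is the right intuition, but without the rank filtration and the stopping-time function you have not actually produced a continuous retraction, and that is where the whole content of the theorem lies. (Your first step -- that points off $\CX$ are regular points of the topological Morse function $\syst$ -- is correct but unused: the paper needs only the flow itself and Mahler/Hermite properness, not the Morse-theoretic framework.)
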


From a geometric point of view, that the systoles generate a finite index subgroup of $\pi_1(\BT^n)$ seems to be a peculiar condition. This led the authors to wonder whether the subset $\CY$ of $\CT_n$ consisting of those points $\rho\in\CT_n$ with the property that the systoles generate the full group $\pi_1(\BT^n)$ could be a $\SL_n\BZ$-equivariant deformation retract as well. For $n=2$, $3$ and $4$, this is known, as for these cases the sets $\CX$ and $\CY$ coincide \cite{lattices,van der Waerden}. The goal of this note is to show that this fails to be true for $n\ge 5$, although the complex $\CY$ is always a CW-complex of dimension $\frac{n(n-1)}{2}$.

\begin{sat}\label{main}
For $n\ge 5$, the subset $\CY$ of $\CT_n$ consisting of those points $\rho$ with the property that $\CS(\rho)$ generates $\pi_1(\BT^n)$ is not contractible and hence it is not a $\SL_n\BZ$-equivariant spine.
\end{sat}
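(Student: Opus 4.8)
The plan is to detect the failure of contractibility homologically, using Ash's spine $\CX$ of Theorem~\ref{well-rounded} as a contractible ambient complex in which $\CY$ sits as a subcomplex. Identify $\CT_n$ with the space of unit-volume positive definite quadratic forms on $\BR^n$, so that each $\rho$ has its finite symmetric set $\CS(\rho)\subset\BZ^n\setminus\{0\}$ of minimal vectors; Ash's complex is organized so that $\CS(\cdot)$ is constant on the interior of each cell and can only enlarge when one passes to a face (degenerating to a more symmetric form brings new vectors down to the minimal length). Two consequences: first, $\CY=\{\rho:\langle\CS(\rho)\rangle=\pi_1(\BT^n)\}$ is a \emph{subcomplex} of $\CX$, since enlarging $\CS(\rho)$ cannot destroy the spanning property; second, if $\rho_0\notin\CY$ then every nearby $\rho$ has $\CS(\rho)\subseteq\CS(\rho_0)$, hence $\langle\CS(\rho)\rangle$ is contained in the proper finite-index subgroup $\langle\CS(\rho_0)\rangle$, so a whole neighborhood of $\rho_0$ in $\CT_n$ misses $\CY$ and $\CX\setminus\CY$ is an open union of cells of $\CX$.

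Since $\CT_n$ is contractible and $\CX$ is a deformation retract of it, $\CX$ is contractible, and the long exact sequence of the pair gives $\widetilde H_{k-1}(\CY)\cong H_k(\CX,\CY)$. Here $H_*(\CX,\CY)$ is the homology of the cellular chain complex spanned by the \emph{bad} cells --- those on which the systoles span a proper finite-index subgroup --- with the differential recording only those incident faces that are themselves bad. So it suffices, for each $n\ge5$, to exhibit a nonzero class there. Concretely I would look for a bad configuration of top dimension $\tfrac{n(n-1)}{2}$: linearly independent vectors $v_1,\dots,v_n$ spanning a proper finite-index subgroup $L\subsetneq\BZ^n$ (e.g.\ of index two), together with a generic form $Q$ with $Q(v_1)=\dots=Q(v_n)$ having $\pm v_1,\dots,\pm v_n$ as its only minimal vectors; such a configuration can be realized as a full set of systoles exactly for $n\ge5$, not for $n\le4$, which is the shadow of the coincidence $\CX=\CY$ in low dimensions. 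One then analyzes the codimension-one degenerations of this top cell: a face is obtained by bringing a further vector $w$ down to the minimum, and the face is \emph{good} precisely when $w\notin L$. In the most favorable case one arranges $L$ and $Q$ so that every codimension-one face of the chosen top cell is good; then its fundamental class is automatically a nonzero element of $H_{n(n-1)/2}(\CX,\CY)$, whence $\widetilde H_{n(n-1)/2-1}(\CY)\ne0$ and $\CY$ is not contractible --- consistent with, and nearly sharp for, the bound $\dim\CY=\tfrac{n(n-1)}{2}$.

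For $n>5$ one cannot simply orthogonally sum the five-dimensional example with a well-rounded lattice generated by its minimal vectors, because an orthogonal sum never lies in a top cell (imposing orthogonality of the blocks is positive codimension); the bad configurations must instead be produced directly in each dimension $n\ge5$, or transported along a suitable totally geodesic copy of $\CT_5$ inside $\CT_n$ chosen to be compatible with the two retractions. A convenient way to repackage the computation, when the bad locus clusters around a single maximally degenerate bad form $\rho_0$ (a vertex of $\CX$), is to excise the open star of $\rho_0$: then $\CX$ is recovered from $\CY$ by coning off $\mathrm{lk}_{\CX}(\rho_0)$, so $\widetilde H_{*-1}(\CY)\cong\widetilde H_{*-1}(\mathrm{lk}_{\CX}(\rho_0))$ and the task becomes showing that the link of a well-chosen bad well-rounded lattice has nontrivial reduced homology --- a finite, if intricate, check.

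The main obstacle is exactly this local analysis: controlling the interaction among the (infinitely many, and infinitely many modulo $\SL_n\BZ$) bad cells well enough to be certain the candidate relative cycle is not a relative boundary --- equivalently, understanding which well-rounded configurations are ``contiguous'' to a given one in the sense of Voronoi--Ash reduction theory. The delicate point throughout is ruling out the possibility that the degenerations of a bad configuration always reintroduce a vector outside $L$ in just the pattern needed to fill in every relevant relative cycle; this is where the hypothesis $n\ge5$ and the precise choice of the subgroup $L$ must do their work.
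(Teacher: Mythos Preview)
Your strategy is genuinely different from the paper's, and it is not carried to completion.

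The paper never touches the cellular combinatorics of the pair $(\CX,\CY)$. Instead it passes to a locally symmetric quotient $M_\Gamma=S_n/\Gamma$ for $\Gamma$ torsion-free and contained in the level-two congruence kernel, and uses the Lefschetz pairing
\[
\iota:H_{\frac{n(n-1)}2}(M_\Gamma)\times H_{n-1}(\bar M_\Gamma,\partial\bar M_\Gamma)\to\BZ/2\BZ .
\]
The nilmanifold $N/(N\cap\Gamma)$ gives a class in the first factor; the positive diagonal flat $\Delta$ gives a dual class in the second, and the two pair to $1$. One then translates the flat by the explicit matrix $A$ (precisely your index-two example: $\CS(A)=\{\pm e_1,\dots,\pm e_{n-1},\pm(2e_n-\sum e_i)\}$). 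A single matrix computation shows $HA\in\CX$ if and only if $H=\Id$; since $A\notin\CY$, the translated $(n-1)$-cycle $\{HA:H\in\Delta\}$ misses $\CY/\Gamma$ entirely, so its dual $\tfrac{n(n-1)}2$-class is not in the image of $H_{\frac{n(n-1)}2}(\CY/\Gamma)\to H_{\frac{n(n-1)}2}(M_\Gamma)$, and $\CY$ cannot be contractible.

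So where you propose to enumerate the codimension-one faces of the bad top cell and check each is good, the paper instead probes $\CX$ with a single transversal $(n-1)$-flat through the bad point $A$ and reduces everything to one inequality. Your route, if completed, would produce an explicit nonzero class in $\widetilde H_{\frac{n(n-1)}2-1}(\CY)$; the paper's detects noncontractibility indirectly via the inclusion $\CY/\Gamma\hookrightarrow M_\Gamma$ failing to be a homology isomorphism, trading the Voronoi--Ash face analysis for a duality argument plus linear algebra.

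The genuine gap in your proposal is exactly the step you flag: you never verify for any specific bad top cell that all its codimension-one faces are good. One small correction that would sharpen what remains: for a \emph{top}-dimensional cell there is nothing to fear from relative boundaries, since $\CX$ has no cells of dimension $\tfrac{n(n-1)}2+1$; so your final paragraph's worry about ``filling in every relevant relative cycle'' collapses to the face-by-face check of the penultimate paragraph, and it is only that check which is missing. Your alternative via the link of a bad vertex $\rho_0$ presupposes that $\CX\setminus\CY$ is the open star of a single vertex, which you have not established and which is not obviously true.
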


Observe that Ash's spine $\CX$, known as {\em the well-rounded retract}, is homeomorphic to a CW-complex with the same dimension as the virtual cohomological dimension $\vcdim(\SL_n\BZ)=\frac{n(n-1)}2$ of $\SL_n\BZ$. The complex $\CY$ is also a CW-complex of the correct dimension.
\vspace{0.2cm}

In order to prove Theorem \ref{main}, we make use of the well-known identification between the Teichm\"uller space $\CT_n$ and the symmetric space $S_n=\SO_n\bs\SL_n\BR$. We discuss this identification in Section \ref{generalities}. For the convenience of the reader, we also sketch briefly the proof of Theorem \ref{well-rounded} in Section \ref{retract}. Now let $\Gamma$ be a torsion free finite index subgroup of $\SL_n\BZ$. The action of $\Gamma$ on $S_n$ is free and hence the quotient $M_\Gamma=S_n/\Gamma$ is a manifold. Borel and Serre \cite{Borel-Serre} constructed a compact manifold $\bar M_\Gamma$ with boundary $\D\bar M_\Gamma$ whose interior is homeomorphic to $M_\Gamma$. In section \ref{topology} we briefly describe how to construct non-trivial homology classes  in $H_{\frac{n(n-1)}2}(M_\Gamma)$ and $H_{n-1}(\bar M_\Gamma,\D\bar M_\Gamma)$. These classes are then used in Section \ref{sec:proof} to show that whenever $\Gamma$ is as above and is contained in the kernel of the standard homomorphism $\SL_n\BZ\to\SL_n\BZ/2\BZ$, the inclusion $\CY/\Gamma\to M_\Gamma$ is not surjective on the $\frac{n(n-1)}2$-homology; Theorem \ref{main} follows. 
\vspace{0.2cm}

We thank Martin Henk for showing us an example of a point $\CX\setminus\CY$. We also thank Mladen Bestvina for convincing us that there was no way that $\CY$ was a retract, and for almost completely proving it for us. The second author is grateful to the Department of Mathematics of Stanford University for its hospitality while this note was being written.

%
%
%
%

\section{Generalities}\label{generalities}

We begin by fixing some notation that will be used in the sequel. We denote by $\{e_1,\dots,e_n\}$ and $\langle\cdot,\cdot\rangle$ the standard basis and scalar product on $\BR^n$. If $v$ or $A$ are vectors or matrices we let $\trans v$ and $\trans A$ denote their transposes. Using this notation $\vert v\vert=\sqrt{\trans vv}$ is the standard euclidean norm on $\BR^n$. If $\CS$ is a subset of a group then we denote by $\langle\CS\rangle$ the subgroup generated by $\CS$; for example, $\BZ^n=\langle\{e_1,\dots,e_n\}\rangle$. If $\CS$ is a subset of a euclidean vector space, we denote by $\langle\CS\rangle_\BR$ the $\BR$-linear subspace generated by $\CS$ and by 
$\langle\CS\rangle_\BR^\bot$ its orthogonal complement. We will sometimes use the same symbol to denote both an equivalence class and a representative of the equivalence class. For example, we may use the same notation for an element in $\SL_n\BR$, and the corresponding element in the symmetric space $S_n=\SO_n\bs\SL_n\BR$ or in the even smaller quotient $S_n/\SL_n\BZ$. When we do want to distinguish the class of $A$,  we denote it by $[A]$, and we will consistently denote the homology class corresponding to a cycle $\beta$ by $[\beta]$. All the homology groups considered below will have coefficients in the field $\BZ/2\BZ$ of two elements.
\vspace{0.2cm}

These platitudes out of the way, we recall briefly the identification between the Teichm\"uller space $\CT_n$ and the symmetric space $S_n=\SO_n\bs\SL_n\BR$.  If $\rho$ is a flat metric on $\BT^n=\BR^n/\BZ^n$ with unit volume $\vol(\BT^n,\rho)=1$, the universal cover $\BR^n$ is a complete flat manifold with respect to the induced metric $\tilde\rho$. In particular, there is an orientation preserving isometry
$$\phi:(\BR^n,\tilde\rho)\to (\BR^n,\langle\cdot,\cdot\rangle)$$
The action by deck-transformations of the fundamental group $\pi_1(\BT^n)$ on $(\BR^n,\tilde\rho)$ is isometric. Conjugating this action by $\phi$ we obtain an action of $\pi_1(\BT^n)=\BZ^n$ on $(\BR^n,\langle\cdot,\cdot\rangle)$, also by isometries. It follows from a classical result of Bieberbach \cite{Wolf} that the group $\phi\pi_1(\BT^n)\phi^{-1}$ is a group of translations of $\BR^n$. In other words, the isometry $\phi$ induces a homomorphism
$$\BZ^n\to\BR^n,\ \ \gamma\mapsto\{x\mapsto(\phi\circ\gamma\circ\phi^{-1})(x)\}$$
with discrete and co-compact image. Any such homomorphism is the restriction to $\BZ^n$ of an element in $\SL_n\BR$. Different choices for the isometry $\phi$ yield homomorphisms which differ by post-composition with an orthogonal transformation of $(\BR^n,\langle\cdot,\cdot\rangle)$, and hence elements in $\SL_n\BR$ which differ by left-multiplication with an element in $\SO_n$. Thus, to every flat metric on $\BT^n$ we can associate a well-defined point in the symmetric space $S_n=\SO_n\bs\SL_n\BR$. Moreover, equivalent flat metrics on $\BT^n$ induce the same point in $S_n$. We have thus a well-defined map
\begin{equation}\label{identification}
\CT_n\to S_n=\SO_n\bs\SL_n\BR
\end{equation}
The map \eqref{identification} is a homeomorphism. Observe that under the identification \eqref{identification}, the action of $\SL_n\BZ$ on $\CT_n$ corresponds to the action on $S_n$ by right multiplication.

As defined in the introduction, the systole $\syst(\rho)$ of a point $\rho\in\CT_n$ is the length of the shortest non-trivial geodesic in $(\BT^n,\rho)$ and $\CS(\rho)$ is the set of shortest non-trivial geodesics. Under the identification \eqref{identification}, for $A\in\SL_n\BR$ we have
$$\syst(A)=\min_{v\in\BZ^n,v\neq 0}\vert Av\vert$$
and
$$\CS(A)=\{v\in\BZ^n, \vert Av\vert=\syst(A)\}$$
In particular, Ash's well rounded spine $\CX$ and the complex $\CY$ considered in Theorem \ref{main} are given by:
\begin{align*}
\CX
&=\{\rho\in\CT_n\vert \langle\CS(\rho)\rangle\ \hbox{has finite index in}\ \pi_1(\BT^n)\}\\
&=\{A\in S_n\vert \langle\CS(A)\rangle\ \hbox{has finite index in}\ \BZ^n\}\\
\CY
&=\{\rho\in\CT_n\vert \langle\CS(\rho)\rangle=\pi_1(\BT^n)\}\\
&=\{A\in S_n\vert \langle\CS(A)\rangle=\BZ^n\}
\end{align*}

As was also mentioned in the introduction, Ash \cite{Ash-Morse} proved that the systole function
$$\CT_n\to(0,\infty),\ \ \rho\mapsto\syst(\rho)$$
is an $\SL_n\BZ$-equivariant topological Morse function. Here we will only use that the systole function is proper when considered as a function on $S_n/\SL_n\BZ$. 

\begin{named}{Mahler's compactness theorem}
For every $\epsilon>0$, the set of those $A\in S_n/\SL_n\BZ$ with $\syst(A)\ge\epsilon$ is compact.
\end{named}

Computations are simpler with matrices than with flat metrics, and so in the sequel we will mainly work in the symmetric space $S_n$. 

%
%
%
%

\section{The well-rounded retract}\label{retract}
In this section we discuss briefly the proof of Theorem \ref{well-rounded}. See \cite{Ash} for a complete proof of a more general version of this theorem.

\begin{named}{Theorem \ref{well-rounded}}[Ash]
The subset $\CX$ of $\CT_n$ consisting of those points $\rho$ with the property that $\CS(\rho)$ generates a finite index subgroup of $\pi_1(\BT^n)$ is an $\SL_n\BZ$-equivariant spine of $\CT_n$.
\end{named}

Recall that given $\rho\in\CT_n$ we denote by $\langle\CS(\rho)\rangle$ the subgroup $\pi_1(\BT^n)$ generated by the shortest non-trivial geodesics in $(\BT^n,\rho)$. Identifying $\pi_1(\BT^n)$ with $\BZ^n$ we see that the subgroup $\langle\CS(\rho)\rangle$ is a free abelian group with rank in $\{1,\dots,n\}$. Moreover, $\rank\langle\CS(\rho)\rangle=n$ if and only if $\langle\CS(\rho)\rangle$ has finite index in $\pi_1(\BT^n)$. For $k=1,\dots,n$ consider the set $\CX_k$ of those points $\rho\in\CT_n$ for which we have $\rank\langle\CS(\rho)\rangle\ge k$.
We have thus the following chain of nested $\SL_n\BZ$-invariant subspaces:
$$\CX=\CX_n\subset\CX_{n-1}\subset\dots\subset\CX_1=\CT_n$$
In order to prove Theorem \ref{well-rounded} it suffices to show that for $k=1,\dots,n-1$ the space $\CX_{k+1}$ is an $\SL_n\BZ$-equivariant spine of $\CX_k$. In order to see that this is the case we use freely the identification \eqref{identification} discussed above between the Teichm\"uller space $\CT_n$ and the symmetric space $S_n=\SO_n\bs\SL_n\BR$. 

Under this identification, a point $A\in S_n$ belongs to $\CX_k\setminus\CX_{k+1}$ if and only if the set $\CS(A)$ generates a rank $k$ subgroup of $\BZ^n$. Equivalently, $\CS(A)$ generates a $k$-dimensional $\BR$-linear subspace $\langle\CS(A)\rangle_\BR$ of $\BR^n$. Given $A\in\CX_k$ and $\lambda\in\BR$, consider the one-parameter family of linear maps
\begin{equation}\label{flow}
T_A^\lambda\in\SL_n\BR,\ \ \ T_A^\lambda(v)=\left\{
\begin{array}{ll}
e^{(n-k)\lambda}v  & \hbox{for}\ v\in A\langle\CS(A)\rangle_\BR  \\
e^{-k\lambda}v  &  \hbox{for}\ v\in (A\langle\CS(A)\rangle_\BR)^\bot
\end{array}
 \right.
\end{equation}
where $(A\langle\CS(A)\rangle_\BR)^\bot$ is the orthogonal complement in $(\BR^n,\langle\cdot,\cdot\rangle)$ of the image under $A$ of $\langle\CS(A)\rangle_\BR$. 

Now $T_A^0 A=A$, and if $A\in\CX_k\setminus\CX_{k+1}$, there is some $\lambda$ positive with $T_A^\lambda A\in\CX_{k+1}$. For $A\in\CX_k$ let $\tau(A)\ge 0$ be maximal such that
$$T_A^\lambda A\in\CX_k\setminus\CX_{k+1}\ \ \hbox{for all}\ \lambda\in(0,\tau(A))$$
By definition $\tau(A)=0$ for $A\in\CX_{k+1}$. The function $A\mapsto\tau(A)$ is continuous on $\CX_k$, which implies that 
\begin{equation}\label{homotopy}
[0,1]\times\CX_k\to\CX_k,\ \ (t,A)\mapsto T_A^{t\tau(A)}A
\end{equation}
is continuous as well. By definition, this homotopy is $\SL_n\BZ$-equivariant, starts with the identity, and ends with a projection of $\CX_k$ to $\CX_{k+1}$. This proves that $\CX_{k+1}$ is an $\SL_n\BZ$-equivariant spine of $\CX_k$ for $k=1,\dots,n-1$, concluding the sketch of the proof of Theorem \ref{well-rounded}.

\begin{bem}
Something must be done to verify the continuity of \eqref{homotopy} as the map 
$$\BR\times\CX_k\to\SL_n\BR,\ \ (\lambda,A)\mapsto T_A^\lambda A$$
itself is not continuous. The key point is that this map is continuous on $\BR\times(\CX_k\setminus\CX_{k+1})$, and by definition $\tau(A)=0$ for $A\in\CX_{k+1}$.
\end{bem}

We conclude this section with a couple of additional remarks about the structure of the well-rounded retract $\CX$ and a computation of the virtual cohomological dimension of $\SL_n\BZ$. 

It is not difficult to prove that $\CX_k$ is a co-dimension $k-1$ semi-algebraic set given by a locally finite collection of inequalities and quadratic algebraic equations. Hence $\CX$ is homeomorphic to a CW-complex of dimension 
$$\dim(\CX)=\dim S_n-(n-1)=\frac{n(n-1)}2$$
It is also easy to see that the well-rounded retract $\CX$ is cocompact, although $\CX_k$ is not cocompact for $k<n$.

The symmetric space $S_n$ is contractible, hence so is $\CX$. In particular, if $\Gamma$ is a subgroup of $\SL_n\BZ$ which acts freely on $S_n$, then $\CX/\Gamma$ is an Eilenberg-MacLane space for $\Gamma$, giving us the following upper bound on its cohomological dimension:
$$\cdim(\Gamma)\le\dim(X)=\frac{n(n-1)}2$$
The group $\SL_n\BZ$ contains subgroups $\Gamma$ of finite index which are torsion free and thus act freely on $S_n$. This yields the upper bound
$$\vcdim(\SL_n\BZ)\le \frac{n(n-1)}2$$
for the virtual cohomological dimension of $\SL_n\BZ$. One can see the upper bound is sharp as follows: Let $N$ be the $\frac{n(n-1)}2$-dimensional subgroup of $\SL_n\BR$ consisting of upper triangular matrices with units in the diagonal. The intersection $N\cap\SL_n\BZ$ is a cocompact subgroup of $N$; hence for $\Gamma$ as above $N/(N\cap\Gamma)$ is a closed manifold of dimension $\frac{n(n-1)}2$. The group $N$ is contractible, hence $N/(N\cap\Gamma)$ is an Eilenberg-MacLane space for $N\cap\Gamma$. Thus we have 
$$\cdim(\Gamma)\ge\cdim(N\cap\Gamma)=\dim(N/(N\cap\Gamma))=\frac{n(n-1)}2$$
This implies that $\vcdim(\SL_n\BZ)=\frac{n(n-1)}2$.

In the next section we will give an elementary argument to prove that the homology class $[N/(N\cap\Gamma)]\in H_{\frac{n(n-1)}2}(M_\Gamma)$ is non-trivial.

%
%
%
%

\section{Some topology}\label{topology}
As mentioned some lines above, $\SL_n\BZ$ contains a torsion free subgroup of finite index, and any such subgroup acts not only discretely, but also freely on $S_n$; hence the quotient $M_\Gamma=S_n/\Gamma$ is a manifold. Borel and Serre \cite{Borel-Serre} proved that $M_\Gamma$ is homeomorphic to the interior of a compact manifold $\bar M_\Gamma$ with boundary $\D\bar M_\Gamma$. Identifying $\bar M_\Gamma$ with the complement of an open regular neighborhood of $\D\bar M_\Gamma$ we consider from now on the former as a submanifold of $M_\Gamma$ and choose a map
\begin{equation}\label{projection}
p:M_\Gamma\to\bar M_\Gamma
\end{equation}
whose restriction to $\bar M_\Gamma$ is the identity. 

\begin{bem}
Grayson \cite{Grayson} gave a construction of $\bar M_\Gamma$ directly as a submanifold of $M_\Gamma$, giving a new proof of some of Borel's and Serre's results. If we are only interested in constructing a compactification $\bar M_\Gamma$ as above, we can do the following: For $A\in\SL_n\BR$ the series $\sum_{v\in\BZ^n}e^{-\vert Av\vert}$ converges, and its value depends only on the class of $A$ in $S_n$. In particular, the function 
$$F:S_n\to\BR, \ \ F(A)=\sum_{v\in\BZ^n}e^{-\vert Av\vert}$$
is well-defined, smooth, and descends to a function $f:M_\Gamma\to\BR$. The function $f$ is proper, and there is some constant $L$ which bounds above the critical values of $f$. This implies that $f^{-1}[L,\infty)$ is a product, hence we can set $\bar M_\Gamma=f^{-1}[0,L]$.
\end{bem}

Borel and Serre constructed the compactification $\bar M_\Gamma$ to study homological properties of $\Gamma$. We will only need some basic facts, well-known probably to experts and non-experts alike, which we deduce in an elementary way. 

Recall that we always consider homology with coefficients in $\BZ/2\BZ$. By Lefschetz duality there is a non-degenerate pairing
$$\iota:H_{\frac{n(n-1)}2}(M_\Gamma)\times H_{n-1}(\bar M_\Gamma,\D\bar M_\Gamma)\to\BZ/2\BZ$$
which can be computed as follows. Given homology classes $[\alpha]\in H_{\frac{n(n-1)}2}(M_\Gamma)$ and $[\beta]\in H_{n-1}(\bar M_\Gamma,\D\bar M_\Gamma)$, represent them by cycles $\alpha$ and $\beta$ in general position. Then $\iota([\alpha],[\beta])$ is just the parity of the cardinality of the set $\alpha\cap\beta$.

\begin{bem}
This is the simplest version of the Alexander-Whitney product in homology, which dualizes the cup product.
\end{bem}

In particular, in order to prove that the $\frac{n(n-1)}2$-cycle $\alpha=N/(N\cap\Gamma)$ represents a non-trivial homology class it suffices to find a cycle $\beta\in C_{n-1}(\bar M_\Gamma,\D\bar M_\Gamma)$ which intersects $\alpha$ transversally at a single point. In order to find such a cycle $\beta$ we consider the subgroup $\Delta$ of $\SL_n\BR$ consisting of diagonal matrices with positive entries and the map $\Delta\to M_\Gamma$ which maps every $H\in\Delta$ to its class in $M_\Gamma=\SO_n\bs\SL_n\BR/\Gamma$. By Mahler's compactness theorem, the systole function is proper on $S_n/\SL_n\BZ$; since $\Gamma$ has finite index in $\SL_n\BZ$ it is also proper on $M_\Gamma$. Then the following lemma implies that the map $\Delta\to M_\Gamma$ is proper as well.

\begin{lem}\label{help-diag}
Let $H\in\Delta$ be a diagonal matrix with positive entries. Then $\syst(H)$ is the minimum of the entries in the diagonal of $H$. In particular $\syst(H)\le 1$, with equality if and only if $H=\Id$. 
\end{lem}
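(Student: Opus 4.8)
The plan is to compute $\syst(H)$ directly from the formula $\syst(H)=\min_{v\in\BZ^n\setminus\{0\}}\vert Hv\vert$ provided by the identification of $S_n$ with $\CT_n$. Write $H=\mathrm{diag}(h_1,\dots,h_n)$ with $h_i>0$ and $\prod_i h_i=1$ (since $H\in\SL_n\BR$). For any $v=(v_1,\dots,v_n)\in\BZ^n\setminus\{0\}$ we have $\vert Hv\vert^2=\sum_i h_i^2 v_i^2$, so I would bound this below: since the $v_i$ are integers, at least one satisfies $v_i^2\ge 1$, hence $\sum_i h_i^2 v_i^2 \ge h_j^2 v_j^2\ge h_j^2$ for that index $j$, and therefore $\vert Hv\vert\ge\min_i h_i$. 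Conversely, taking $v=e_j$ where $h_j=\min_i h_i$ gives $\vert He_j\vert = h_j = \min_i h_i$, and $e_j$ is a primitive (hence homotopically essential) element of $\BZ^n$. Combining the two bounds yields $\syst(H)=\min_i h_i$.

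For the second assertion, I would invoke the constraint $\det H = \prod_{i=1}^n h_i = 1$ together with the AM--GM inequality (or simply the observation that a product of $n$ positive reals equal to $1$ cannot have all factors exceeding the minimum unless they are all equal): since $(\min_i h_i)^n \le \prod_i h_i = 1$, we get $\min_i h_i \le 1$, i.e. $\syst(H)\le 1$. For the equality case, if $\min_i h_i = 1$ then every $h_i\ge 1$ while $\prod_i h_i = 1$ forces $h_i = 1$ for all $i$, so $H=\Id$; the converse is immediate since $\syst(\Id)=\min_i 1 = 1$.

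There is no real obstacle here — the statement is essentially a one-line computation once the formula $\syst(H)=\min_{v\neq 0}\vert Hv\vert$ is available. The only point requiring a word of care is the lower bound step: one must use that the coordinates $v_i$ are \emph{integers}, so that a nonzero $v$ has some coordinate of absolute value at least $1$; this is exactly where the arithmetic (rather than merely metric) nature of the systole enters. The role of this lemma in the larger argument is that, combined with Mahler's compactness theorem, it shows the orbit map $\Delta\to M_\Gamma$ is proper, since $\syst$ blows up along any sequence in $\Delta$ leaving every compact set precisely when some diagonal entry tends to $0$, which by $\det = 1$ forces another to tend to $\infty$ — so the image of a divergent sequence in $\Delta$ has systole bounded away from $0$ only on compacta, giving properness of the map into $M_\Gamma$ after quotienting.
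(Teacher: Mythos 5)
Your proof is correct and follows exactly the same line of argument as the paper: bound $\vert Hv\vert$ below by the minimal diagonal entry using that a nonzero integer vector has a coordinate of absolute value at least $1$, realize the bound at a standard basis vector, and use $\det H = 1$ for the second claim. The additional remarks on properness of $\Delta\to M_\Gamma$ are accurate and match the paper's stated use of the lemma.
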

\begin{proof}
Let $a_1,\dots,a_n$ be the diagonal entries of $H$, and for the sake of concreteness assume that $a_1$ is minimal. Then for $v=\trans(v_1,\dots,v_n)\in\BZ^n$ with, say, $v_i\neq 0$, we have
$$\vert Av\vert=\sqrt{a_1^2v_1^2+\dots+a_n^2v_n^2}\ge\vert a_iv_i\vert\ge a_i\ge a_1$$
with equality if, for example, $v_1=1$ and $v_2=\dots=v_n=0$. This proves the first claim of the lemma. The second claim follows from the fact that $a_1\dots a_n=1$ so that either some $a_i$ is less than $1$ or all of the $a_i$'s are equal to $1$.
\end{proof}

Composing the proper map $\Delta\to M_\Gamma$ with the projection \eqref{projection} we obtain a cycle $\beta$ in $C_{n-1}(\bar M_\Gamma,\D\bar M_\Gamma)$. We denote by $[\Delta]=[\beta]$ the homology class of $\beta$.

\begin{lem}\label{help-nil}
Let $A\in N$ be an upper triangular matrix with $1$ at the diagonal. Then $\syst(A)=1$.
\end{lem}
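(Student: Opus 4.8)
The plan is to show that for an upper triangular matrix $A\in N$ with ones on the diagonal, the systole is exactly $1$, by proving the two inequalities $\syst(A)\le 1$ and $\syst(A)\ge 1$ separately. The upper bound is immediate: applying $A$ to the first standard basis vector $e_1$ gives $Ae_1=e_1$, since the first column of an upper triangular matrix with unit diagonal is $e_1$; hence $\syst(A)\le\vert Ae_1\vert=1$. So the whole content is the lower bound $\vert Av\vert\ge 1$ for every nonzero $v\in\BZ^n$.

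For the lower bound I would argue by induction on $n$, or equivalently look at the last nonzero coordinate of $v$. Write $v=\trans(v_1,\dots,v_n)\in\BZ^n\setminus\{0\}$ and let $k$ be the largest index with $v_k\neq 0$. Because $A$ is upper triangular with $1$'s on the diagonal, the $k$-th coordinate of $Av$ is $\sum_{j\ge k}A_{kj}v_j=A_{kk}v_k=v_k$, since all $v_j$ with $j>k$ vanish. Therefore $\vert Av\vert\ge\vert(Av)_k\vert=\vert v_k\vert\ge 1$, as $v_k$ is a nonzero integer. This gives $\syst(A)\ge 1$, and combined with the upper bound we get $\syst(A)=1$.

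I expect no real obstacle here; the only thing to be slightly careful about is the direction of the triangularity and the indexing convention for how $A$ acts on column vectors, so that one picks the coordinate of $Av$ that is unaffected by the off-diagonal entries. This mirrors exactly the structure of the proof of Lemma \ref{help-diag}, where one isolates a single coordinate of $Av$ to bound $\vert Av\vert$ from below; the difference is that for a general element of $N$ one cannot isolate an arbitrary nonzero coordinate of $v$, but must use the last one so that the strictly-upper-triangular part contributes nothing.

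\begin{proof}
Since $A$ is upper triangular with $1$'s on the diagonal, its first column is $e_1$, so $Ae_1=e_1$ and hence $\syst(A)\le\vert Ae_1\vert=1$. For the reverse inequality, let $v=\trans(v_1,\dots,v_n)\in\BZ^n$ be nonzero and let $k$ be the largest index with $v_k\neq 0$. The $k$-th coordinate of $Av$ is $\sum_{j=1}^n A_{kj}v_j$; as $A_{kj}=0$ for $j<k$ and $v_j=0$ for $j>k$, this sum equals $A_{kk}v_k=v_k$. Therefore
$$\vert Av\vert\ge\vert(Av)_k\vert=\vert v_k\vert\ge 1,$$
since $v_k$ is a nonzero integer. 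Thus $\syst(A)\ge 1$, and combining the two inequalities gives $\syst(A)=1$.
\end{proof}
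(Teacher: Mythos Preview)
Your proof is correct and essentially identical to the paper's: both pick the largest index $k$ with $v_k\neq 0$, observe that the $k$-th coordinate of $Av$ is $v_k$ by upper-triangularity, and use $Ae_1=e_1$ for the upper bound. There is nothing to add.
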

\begin{proof}
Given $v=\trans(v_1,\dots,v_n)\in\BZ^n$, let $i$ be minimal such that $v_j=0$ for all $j>i$. Then we have that $v_i$ is the $i$-th coordinate of $Av$ and hence $\vert Av\vert\ge\vert v_i\vert\ge 1$, with equality when, for example, $v_1=1$ and $v_2=\dots=v_n=0$.
\end{proof}

The intersection points of the cycles $\alpha=N/(N\cap\Gamma)$ and $\beta$ in $M_\Gamma$ correspond bijectively to the set of those $H\in\Delta$ for which there is $A\in\Gamma$ with $HA\in N$.  For any such $H$ we have by Lemma \ref{help-nil}
$$1=\syst(HA)=\syst(H)$$
and hence $H=\Id$; thus $\alpha$ and $\beta$ intersect at a single point. Moreover, their intersection is locally modeled by the intersection of the images of $\Delta$ and $N$ in $S_n$ and hence it is transversal; therefore $\iota([\alpha],[\beta])=1$. This implies that $[\alpha]=[N/(N\cap\Gamma)]$ and $[\beta]=[\Delta]$ are not homologically trivial.

\begin{lem}\label{nil-essential}
If $\Gamma$ is a torsion-free subgroup of $\SL_n\BZ$ then the classes $[N/N\cap\Gamma]\in H_{\frac{n(n-1)}2}(M_\Gamma)$ and $[\Delta]\in H_{n-1}(\bar M_\Gamma,\D\bar M_\Gamma)$ have intersection
$$\iota([N/N\cap\Gamma],[\Delta])=1$$
and hence are not trivial.\qed
\end{lem}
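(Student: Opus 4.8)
The plan is to combine the two auxiliary computations, Lemma \ref{help-diag} and Lemma \ref{help-nil}, with the transversal-intersection description of the Lefschetz pairing $\iota$ recalled above. First I would fix a torsion-free finite-index $\Gamma\le\SL_n\BZ$, so that $M_\Gamma=S_n/\Gamma$ is a manifold and the compactification $\bar M_\Gamma\subset M_\Gamma$ together with the retraction $p\colon M_\Gamma\to\bar M_\Gamma$ of \eqref{projection} are available. I would then set $\alpha=N/(N\cap\Gamma)$, a closed $\tfrac{n(n-1)}2$-manifold mapping into $M_\Gamma$, and let $\beta$ be the image in $C_{n-1}(\bar M_\Gamma,\D\bar M_\Gamma)$ of the proper map $\Delta\to M_\Gamma$ composed with $p$; properness of $\Delta\to M_\Gamma$ is exactly what Lemma \ref{help-diag} together with Mahler's compactness theorem guarantees, so $\beta$ is a genuine relative cycle.

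The heart of the argument is to show $\alpha$ and $\beta$ meet transversally in exactly one point of $M_\Gamma$. An intersection point corresponds to a pair $(H,A)$ with $H\in\Delta$ and $A\in\Gamma$ satisfying $HA\in N$ (up to the left $\SO_n$-action, which does not affect systoles). For any such pair, Lemma \ref{help-nil} gives $\syst(HA)=1$, while $\syst(HA)=\syst(H)$ since $A\in\Gamma$ acts by a right multiplication preserving systole length; but then Lemma \ref{help-diag} forces $H=\Id$. Thus there is a unique intersection point, namely the class of $\Id$. For transversality I would observe that near this point the intersection of $\alpha$ and $\beta$ is modeled on the intersection in $S_n=\SO_n\bs\SL_n\BR$ of the images of $N$ and of $\Delta$; since $\fso_n\oplus\Delta\textnormal{-Lie algebra}\oplus\mathfrak n=\fsl_n\BR$ as vector spaces (the standard triangular decomposition), these two submanifolds meet transversally at $\Id$, and this is preserved under the local diffeomorphism to $M_\Gamma$. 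Hence $\iota([\alpha],[\beta])$ is the parity of a one-point set, i.e. $1$, and a nontrivial pairing value forces both classes to be nonzero.

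The main obstacle is the transversality/dimension bookkeeping: one must check that $\beta$, after pushing forward by $p$, really represents a class in $H_{n-1}(\bar M_\Gamma,\D\bar M_\Gamma)$ of the right dimension and that the retraction $p$ does not destroy the single transverse intersection point — this is fine because the intersection occurs at the class of $\Id$, which lies in the compact core $\bar M_\Gamma$ where $p$ is the identity, so $p$ affects only the noncompact end of $\Delta$ and introduces no new intersections with the compact cycle $\alpha$. Everything else is the direct juxtaposition of the two lemmas and the combinatorial description of $\iota$, so no further computation is needed.
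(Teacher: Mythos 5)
Your proposal is correct and follows essentially the same route as the paper: identify intersection points with pairs $(H,A)\in\Delta\times\Gamma$ satisfying $HA\in N$, use Lemma \ref{help-nil} together with the $\SL_n\BZ$-invariance of $\syst$ and Lemma \ref{help-diag} to force $H=\Id$, and then read off $\iota([\alpha],[\beta])=1$ from the single transverse intersection. The only additions you make — spelling out the Iwasawa-type decomposition $\fsl_n\BR=\fso_n\oplus\mathfrak a\oplus\mathfrak n$ to justify transversality, and noting that the retraction $p$ leaves the intersection at the class of $\Id$ untouched — are details the paper leaves implicit, so this is the same argument with slightly more justification.
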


%
%
%
%

\section{Proof of Theorem \ref{main}}\label{sec:proof}
Taking into account the title of this section, it can hardly be surprising that we now prove:

\begin{named}{Theorem \ref{main}}
For $n\ge 5$, the subset $\CY$ of $\CT_n$ consisting of those points $\rho$ with the property that $\CS(\rho)$ generates $\pi_1(\BT^n)$ is not contractible and hence it is not a $\SL_n\BZ$-equivariant spine.
\end{named}

Let all the notation be as in the previous section. As mentioned in the introduction, in order to prove Theorem \ref{main} we will show that there is a finite index torsion free subgroup $\Gamma\subset\SL_n\BZ$ for which the map 
\begin{equation}\label{map-homology}
H_{\frac{n(n-1)}2}(\CY/\Gamma)\to H_{\frac{n(n-1)}2}(M_\Gamma)
\end{equation}
is not surjective. More precisely, we will show that this is the case for those torsion-free finite index subgroups $\Gamma$ contained in the kernel of the homomorphism
\begin{equation}\label{congruence}
\SL_n\BZ\to\SL_n\BZ/2\BZ
\end{equation}
Fix such a $\Gamma$ and let $A\in\SL_n\BR$ be the upper triangular matrix which, up a factor, is the identity on the upper left $(n-1)\times(n-1)$ quadrant and with entries equal to $\frac 12$ in the last column 
\begin{equation}\label{point}
A=2^{-\frac 1n}\left(
\begin{array}{ccccc}
1  & 0  & \dots & 0 & \frac 12  \\
0  & 1  & \dots  & 0 & \frac 12 \\
\vdots  & \vdots  & \ddots  & \vdots & \vdots \\
0  & 0  & \dots  & 1 & \frac 12 \\
0  & 0  & \dots  & 0 & \frac 12
\end{array}
\right)
\end{equation}
The assumption that $\Gamma$ is contained in the kernel of \eqref{congruence} implies that every element $B\in\Gamma$ can be written as $B=\Id+B'$ where every entry of $B'$ is even. In particular, we have for any such $B$ that $ABA^{-1}$ has integer entries and hence that 
$$A\Gamma A^{-1}\subset\SL_n\BZ$$
Observe that we have a diffeomorphism $\CA:M_{A\Gamma A^{-1}}\to M_\Gamma$ such that the following diagram commutes:
$$\xymatrix{
S_n\ar[d]\ar[r]^{\{[B]\mapsto[BA]\}} & S_n\ar[d]  \\
M_{A\Gamma A^{-1}}\ar[r]^{\CA}& M_\Gamma
}$$
The diffeomorphism $\CA$ maps the non-trivial, by Lemma \ref{nil-essential}, homology classes 
$$[N/(N\cap(A\Gamma A^{-1}))]\in H_{\frac{n(n-1)}2}(M_{A\Gamma A^{-1}}), [\Delta]\in H_{n-1}(\bar M_{A\Gamma A^{-1}},\D\bar M_{A\Gamma A^{-1}})$$ 
to, a fortiori, non-trivial classes with
$$\iota(\CA_*[\Delta],\CA_*([N/(N\cap(A\Gamma A^{-1}))]))=1$$
Observe that the class $\CA_*[\Delta]\in H_{n-1}(\bar M_\Gamma,\D\bar M_{\Gamma})$ is represented by a cycle supported in $\{HA\vert H\in\Delta\}\cap\bar M_\Gamma$. Below we will prove

\begin{lem}\label{plane}
Assume that $n\ge 5$, that $A$ is the matrix given in \eqref{point} and that $H\in\Delta$ is a diagonal matrix. Then we have:
\begin{itemize}
\item $A\in\CX\setminus\CY$, and
\item $HA\in\CX$ if and only if $H=\Id$.
\end{itemize}
\end{lem}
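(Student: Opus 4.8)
The plan is to pin down $\CS(HA)$ for every diagonal matrix $H$ with positive entries and to read off both assertions from it. Since the set of systoles of a matrix is unaffected by multiplying it by a positive scalar, the normalizing constant $c=2^{-1/n}$ in \eqref{point} plays no role, and we only use that $Ae_i=ce_i$ for $i=1,\dots,n-1$ while $Ae_n=\frac c2(e_1+\dots+e_n)$. Writing $h_1,\dots,h_n$ for the diagonal entries of $H$, one has for $v=\trans(v_1,\dots,v_n)\in\BZ^n$
$$\vert HAv\vert^2=c^2\Bigl(\sum_{i=1}^{n-1}h_i^2\bigl(v_i+\tfrac{v_n}2\bigr)^2+h_n^2\tfrac{v_n^2}4\Bigr).$$
Two families of lattice vectors carry the argument: the vectors $e_1,\dots,e_{n-1}$, with $\vert HAe_i\vert=ch_i$, and the vector $w:=2e_n-e_1-\dots-e_{n-1}$, for which $Aw=ce_n$ and hence $\vert HAw\vert=ch_n$. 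Observe that $\{e_1,\dots,e_{n-1},w\}$ spans $\BR^n$ and that $w+e_1+\dots+e_{n-1}=2e_n$.

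First I would handle $A$ itself, i.e.\ the case $H=\Id$. Minimizing $\vert Av\vert^2$ over $v\ne 0$ by splitting according to whether $v_n$ is zero, odd, or even and nonzero --- and using $\bigl(v_i+\tfrac{v_n}2\bigr)^2\ge\tfrac14$ when $v_n$ is odd --- one gets $\vert Av\vert\ge c$ throughout, with equality precisely for $v\in\{\pm e_1,\dots,\pm e_{n-1},\pm w\}$. The single place $n\ge 5$ enters is the odd case, where $\vert Av\vert^2\ge\frac{c^2 n}4>c^2$. Hence $\CS(A)=\{\pm e_1,\dots,\pm e_{n-1},\pm w\}$, so $\langle\CS(A)\rangle=\{v\in\BZ^n\mid v_n\text{ is even}\}$ is a proper subgroup of index $2$; in particular $A\in\CX\setminus\CY$, which is the first claim.

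For the second claim, fix $H$ and put $m=\min_i h_i$; since $\prod_i h_i=1$ we have $m\le 1$, and the two families above give $\syst(HA)^2\le c^2 m^2\le c^2$. Splitting again according to $v_n$, I would show $\CS(HA)\subseteq\{\pm e_1,\dots,\pm e_{n-1},\pm w\}$: if $v_n=0$ then $\vert HAv\vert^2=c^2\sum_{i<n}h_i^2 v_i^2>c^2 m^2$ unless $v=\pm e_i$ for some $i<n$; if $\vert v_n\vert\ge 2$ then $\vert HAv\vert^2\ge c^2 h_n^2 v_n^2/4\ge c^2 h_n^2\ge c^2 m^2$, and tracing the equalities forces $v_n^2=4$, all the terms $v_i+\tfrac{v_n}2$ to vanish, and $h_n=m$, i.e.\ $v=\pm w$; and if $v_n=\pm 1$, the arithmetic--geometric mean inequality yields
$$\vert HAv\vert^2\ \ge\ \frac{c^2}4\sum_{i=1}^n h_i^2\ \ge\ \frac{c^2 n}4\Bigl(\prod_i h_i\Bigr)^{2/n}\ =\ \frac{c^2 n}4\ >\ c^2\ \ge\ \syst(HA)^2,$$
so no such $v$ is a systole. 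Now if $HA\in\CX$ then $\CS(HA)$ must span $\BR^n$; since the $\pm e_i$ alone span only an $(n-1)$-dimensional subspace and adjoining $w$ raises the dimension by exactly one, a rank count forces $e_1,\dots,e_{n-1}$ and $w$ all to be systoles of $HA$. Then $ch_1=\dots=ch_{n-1}=ch_n=\syst(HA)$, so all $h_i$ coincide, and $\prod_i h_i=1$ forces $H=\Id$. The reverse implication is the first claim, so $HA\in\CX$ if and only if $H=\Id$.

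The one genuinely delicate step is the case $v_n=\pm 1$: everything there rests on $\tfrac n4>m^2$, which holds because $m=\min_i h_i\le(\prod_i h_i)^{1/n}=1$ \emph{and} $n\ge 5$; for $n=4$ it fails, in agreement with the fact recalled in the introduction that $\CX=\CY$ for $n\le 4$. The rest --- the coordinate formula for $\vert HAv\vert^2$, the three elementary case analyses, and the final rank count --- is routine.
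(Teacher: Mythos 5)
Your proof is correct and follows essentially the same route as the paper: compute $\vert HAv\vert^2$, bound the systole by $2^{-1/n}\min_i h_i$, split cases on $v_n$, use $n\ge 5$ to eliminate odd $v_n$, and extract $H=\Id$ from the requirement that $\CS(HA)$ spans $\BR^n$. The only cosmetic differences are that you classify $\CS(HA)\subseteq\{\pm e_1,\dots,\pm e_{n-1},\pm w\}$ up front and finish with a rank count, and that in the odd case you invoke AM--GM to get $\sum_i h_i^2\ge n$ where the paper bounds $\sqrt{\sum_i a_i^2}\ge\sqrt n\min_i a_i$; both reduce to the same inequality $n>4$.
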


Lemma \ref{plane} implies that the homologically non-trivial class $\CA_*[\Delta]$ is supported by a cycle which does not intersect $\CY/\Gamma$. This implies then that the class $\CA_*([N/(N\cap(A\Gamma A^{-1}))])\in H_{\frac{n(n-1)}2}(M_\Gamma)$ is not represented by any cycle in $C_{\frac{n(n-1)}2}(\CY/\Gamma)$. In particular, we deduce that as claimed \eqref{map-homology} is not surjective. We can now conclude the proof of Theorem \ref{main}. If $\CY$ were contractible, then $\CY/\Gamma$ would be an Eilenberg-MacLane space for $\Gamma$ and the inclusion $\CY/\Gamma\hookrightarrow S_n/\Gamma=M_\Gamma$ a homotopy equivalence, contradicting the lack of surjectivity of \eqref{map-homology}.
\vspace{0.2cm}

It just remains to prove Lemma \ref{plane}:

\begin{proof}[Proof of Lemma \ref{plane}]
We start proving that $A\in\CX\setminus\CY$. For every vector $v=\trans(v_1,\dots,v_n)\in\BZ^n$ we have that 
$$\trans(Av)=2^{-\frac 1n}\left(v_1+\frac {v_n}2,\dots,v_{n-1}+\frac {v_n}2,\frac{v_n}2\right)$$
If $v_n$ is odd, then $\vert Av\vert\ge\frac{\sqrt n}22^{-\frac 1n}$. On the other hand, if $v_n$ is even every vector has at least length $2^{-\frac 1n}$ with, for example, equality for $e_1$. This proves that $\syst(A)=2^{-\frac 1n}$ and one can easily see that $\CS(A)$ consists of the following $2n$ vectors in $\BZ^n$
$$\pm e_1,\dots,\pm e_{n-1},\pm (2e_n-\sum_{i=1}^{n-1}e_i)$$
This implies that $\CS(A)$ generates the subgroup of $\BZ^n$ consisting of vectors whose last coordinate is even. This is a proper subgroup with index 2, hence $A\notin\CY$ but $A\in\CX$.
\vspace{0.2cm}

Continuing with the proof of the lemma let $H\in\Delta$ be a diagonal matrix with positive entries $a_1,\dots,a_n$. When we multiply $H$ and $A$ we obtain:
\begin{equation}\label{ugly}
HA=2^{-\frac 1n}\left(
\begin{array}{ccccc}
a_1  & 0  & \dots & 0 & \frac {a_1}2  \\
0  & a_2  & \dots  & 0 & \frac {a_2}2 \\
\vdots  & \vdots  & \ddots  & \vdots & \vdots \\
0  & 0  & \dots  & a_{n-1} & \frac {a_{n-1}}2 \\
0  & 0  & \dots  & 0 & \frac {a_n}2
\end{array}
\right)
\end{equation}
For any such $HA$ and $i=1,\dots,n-1$ we have $\vert HAe_i\vert=2^{-\frac 1n}a_i$. We also have $\vert HA(2e_n-\sum_{i=1}^{n-1}e_i)\vert=2^{-\frac 1n}a_n$. This shows that
\begin{equation}\label{upper-bound}
\syst(HA)\le2^{-\frac 1n}\min\{a_i\vert i=1,\dots,n\}
\end{equation}
Assume from now on that $HA$ belongs to the well-rounded retract $\CX$ and recall that this means that the set $\CS(HA)$ of those $v\in\BZ^n$ with $\vert HAv\vert=\syst(HA)$ generates a finite index subgroup of $\BZ^n$. In particular, there is a shortest vector $v=\trans(w_1,\dots,w_n)\in\CS(HA)$ with $w_n>0$. For such a $v$ one has
$$\syst(HA)=\vert HAv\vert\ge2^{-\frac 1n}\frac{w_n}2a_n$$
We deduce then from \eqref{upper-bound} that $w_n$ is either $1$ or $2$. We claim that $w_n=2$. Otherwise one has
$$\vert HAv\vert\ge\frac 12\sqrt{a_1^2+\dots+a_{n-1}^2+a_n^2}\ge2^{-\frac 1n}\frac{\sqrt n}2\min\{a_i\vert i=1,\dots,n\}$$
contradicting \eqref{upper-bound}, as $n\ge 5$. Hence there is a shortest vector with last coefficient $w_n=2$. Among all these vectors, $HAv$ is minimal if and only if $v=2e_n$; thus $\syst(HA)=2^{-\frac 1n}a_n$. The assumption that $HA\in\CX$ implies that for $i=1,\dots,n-1$ there is also some vector $v'$ with $\vert HAv'\vert=\syst(HA)=2^{-\frac 1n}a_n$ and whose $i$-th coefficient $w_i'$ does not vanish. By the discussion above, the last coefficient of $v'$ must vanish and hence the $i$-th coefficent of $HAv$ is $2^{-\frac 1n}w_i'a_i$. This implies that $a_i=a_n$. We have proved that if $HA\in\CX$ then $H=\Id$.
\end{proof}

%
%
%
%

{\sc \tiny \noindent
Alexandra Pettet, Department of Mathematics, Stanford University}
\vspace{0.2cm}

{\sc \tiny \noindent
Juan Souto, Department of Mathematics, University of Chicago}

\end{document}